\theoremstyle{plain}
\newtheorem{definition}{Definition}
\newtheorem{thm}[definition]{Theorem}
\newtheorem{lem}[definition]{Lemma}
\def\G{\Gamma}
\def\g{\gamma}
\def\Pe{L}
\DeclareMathOperator{\Ex}{{E}}
\DeclareMathOperator{\Va}{{V}}
\DeclareMathOperator{\sgn}{{sgn}}
\begin{document}
\title{Inequality for the variance of an asymmetric loss}
\author{Naoya Yamaguchi, Yuka Yamaguchi and Maiya Hori}
\date{\today}

\maketitle

\begin{abstract}
We assume that the forecast error follows a probability distribution which is symmetric and monotonically non-increasing on non-negative real numbers, 
and if there is a mismatch between observed and predicted value, then we suffer a loss. 
Under the assumptions, we solve a minimization problem with an asymmetric loss function. 
In addition, we give an inequality for the variance of the loss. 
\end{abstract}

\section{Introduction}
Let $\hat{y}$ be a predicted value of an observed value $y$. 
In this paper, 
we make the assumptions~(I) and (II): 
\begin{enumerate}
\item[(I)] 
The prediction error $z := \hat{y} - y$ is the realized value of a random variable $Z$, 
whose probability density function $f(z)$ satisfies $f(x) = f(- x)$ for $x \in \mathbb{R}$ and $f(x) \geq f(y)$ for $0 \leq x \leq y$. 
\item[(II)]  Let $k_{1}$, $k_{2} \in \mathbb{R}_{> 0}$. 
If there is a mismatch between $y$ and $\hat{y}$, 
then we suffer a loss 
\begin{align*}
\Pe(z) := 
\begin{cases}
k_{1} z, & z \geq 0, \\ 
- k_{2} z, & z < 0. 
\end{cases}
\end{align*}
\end{enumerate}
Under the assumptions~(I) and (II), we solve the minimization problem for the expected value of $L(Z + c)$: 
$$
C = \arg{\min_{c}}\{ \Ex[L(Z + c)] \}. 
$$
In addition, we give the following theorem. 
\begin{thm}\label{thm:1}
We have 
$$
\Va[L(Z + C)] \leq \Va[L(Z)], 
$$
where equality holds only when $C = 0$; that is, when $k_{1} = k_{2}$. 
\end{thm}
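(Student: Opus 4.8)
The plan is to first pin down the minimizer $C$ and then recast the variance inequality as the nonnegativity of a single covariance. For the first part, note that $c \mapsto \Ex[L(Z+c)]$ is convex, since its second derivative equals $(k_{1}+k_{2}) f(c) \ge 0$, so the first-order condition $\frac{d}{dc}\Ex[L(Z+c)] = (k_{1}+k_{2}) F(c) - k_{2} = 0$ locates the global minimizer through $F(C) = k_{2}/(k_{1}+k_{2})$, where $F$ is the distribution function of $Z$ and I use the symmetry $F(-x)=1-F(x)$ from assumption~(I). This already yields $C = 0 \iff k_{1}=k_{2}$, the claimed equality case. Swapping $k_{1}$ and $k_{2}$ corresponds to reflecting $Z \mapsto -Z$, under which the problem is invariant, so I may assume $k_{1}\ge k_{2}$ and hence $C\le 0$; write $a := -C \ge 0$.

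The key step is the elementary identity $\Va[X]-\Va[Y] = \mathrm{Cov}(X-Y,\,X+Y)$ applied to $X=L(Z)$ and $Y=L(Z+C)$, which turns the theorem into the single inequality
\[
\Va[L(Z)]-\Va[L(Z+C)] = \mathrm{Cov}\big(\eta(Z),\,\xi(Z)\big)\ge 0, \qquad \eta := L(Z)-L(Z+C),\quad \xi := L(Z)+L(Z+C).
\]
Next I would compute $\eta$ and $\xi$ on the three regions cut out by the kinks at $0$ and $a$. Because $L$ is convex, $\eta(z)=L(z)-L(z-a)$ is non-decreasing: it is flat equal to $-k_{2}a$ for $z\le 0$, rises with slope $k_{1}+k_{2}$ on $[0,a]$, and is flat equal to $k_{1}a$ for $z\ge a$. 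Because $L$ is convex with vertex at $0$, the sum $\xi(z)=L(z)+L(z-a)$ is convex with its minimum exactly at $z=0$, the centre of symmetry of $Z$ (it has slope $-2k_{2}$ on $(-\infty,0)$, slope $k_{1}-k_{2}\ge 0$ on $(0,a)$, and slope $2k_{1}$ on $(a,\infty)$). This alignment of the vertex of $\xi$ with the symmetry centre of $Z$ is what makes the covariance tractable.

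The final and hardest step is to prove $\mathrm{Cov}(\eta(Z),\xi(Z))\ge 0$ even though $\xi$ is not monotone. I would invoke Hoeffding's covariance identity,
\[
\mathrm{Cov}(\eta(Z),\xi(Z)) = \int_{\mathbb{R}}\!\int_{\mathbb{R}} \eta'(s)\,\xi'(t)\,\big[F(\min(s,t)) - F(s)F(t)\big]\,ds\,dt,
\]
whose kernel is everywhere nonnegative. Since $\eta'(s)=(k_{1}+k_{2})\,\mathbf{1}_{(0,a)}(s)$, the outer integral is confined to $s\in(0,a)$; the contributions from the ranges where $\xi'\ge 0$ are automatically nonnegative, and the only dangerous piece is the band $t<0$, whose contribution I would evaluate in closed form using the symmetry of $f$ (it equals $-2k_{2}(1-F(s))\,\Ex[\max\{-Z,0\}]$). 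The crux is then to show that, after integrating $s$ over $(0,a)$, the positive contributions dominate this negative band. I expect this to be exactly where assumption~(I) is indispensable: the monotonicity of $f$ is equivalent to the concavity of $F$ on $[0,\infty)$, and feeding that concavity into the resulting one-dimensional inequality should close the argument, with the uniform law realizing the extremal case of the underlying shape inequality. Controlling this last inequality, rather than setting up the covariance, is where I anticipate the real work, since naive bounds that discard the monotonicity of $f$ fail precisely in the nearly symmetric regime $k_{1}\approx k_{2}$; a parallel route by direct computation of both variances reduces to the same explicit moment inequality and meets the same obstacle.
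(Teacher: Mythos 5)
Your setup is correct as far as it goes: the convexity argument and first-order condition $F(C)=k_{2}/(k_{1}+k_{2})$ match the paper's determination of $C$ (and the equivalence $C=0 \iff k_{1}=k_{2}$); the identity $\Va[X]-\Va[Y]=\mathrm{Cov}(X-Y,\,X+Y)$ is valid; the piecewise descriptions of $\eta$ and $\xi$ are right; and Hoeffding's identity with the nonnegative kernel $F(\min(s,t))-F(s)F(t)$ is a legitimate framework. But the proof stops exactly where the theorem's actual content begins. After the Hoeffding reduction you must show that, integrating $s$ over $(0,a)$, the positive contributions from $t>0$ dominate the negative band you computed as $-2k_{2}\,(1-F(s))\,\Ex[\max\{-Z,0\}]$; you assert that feeding the concavity of $F$ into ``the resulting one-dimensional inequality should close the argument,'' but you never write that inequality down, let alone prove it, and you concede the point yourself (``this is where I anticipate the real work''; ``a parallel route \dots meets the same obstacle''). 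That unproved step is not a routine estimate: it is essentially the paper's Lemma~2, the claim that $\alpha(x) = 4\int_{0}^{x} f(t)\,dt \int_{x}^{\infty} t f(t)\,dt - \frac{x}{2} + 2x \left( \int_{0}^{x} f(t)\,dt \right)^{2} \geq 0$, and it carries all of the difficulty, including the delicate regime $k_{1}\approx k_{2}$ (i.e.\ $a\approx 0$) that you flag, and the identification of the uniform density as the extremal case.

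For comparison, the paper closes this gap with a rearrangement (``bathtub'') argument: fixing $x$ and $\gamma=\int_{0}^{x} f(t)\,dt$, it minimizes $S(g)=\int_{x}^{\infty} t\,g(t)\,dt$ over all tails $0\le g\le f(x)$ of mass $\frac{1}{2}-\gamma$, the minimizer being the rectangle of height $f(x)$ on $\left[x,\;x+\frac{1}{f(x)}\left(\frac{1}{2}-\gamma\right)\right]$, and then finishes by an explicit computation using the pointwise consequence $\gamma\ge x f(x)$ of monotonicity. Note that monotonicity of $f$ enters twice there (once as the bound $g\le f(x)$ on the tail, once as $\gamma\ge x f(x)$), so a single appeal to concavity of $F$ on $[0,\infty)$ is unlikely to substitute without work of comparable substance. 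Your covariance formulation is a genuinely different and clean packaging of the reduction --- the paper instead computes $\Va[L(Z)]-\Va[L(Z+C)]=(k_{1}+k_{2})^{2}\beta(C)$ and differentiates $\beta$, obtaining $\alpha$ plus manifestly nonnegative terms --- but as submitted it is a reformulation of the problem, not a proof. The equality assertion has the same defect: ``equality only when $C=0$'' requires strict positivity of your covariance for $a>0$, which again rests on the step you have not supplied.
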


Theorem~$\ref{thm:1}$ is obtained by the following lemma. 

\begin{lem}\label{lem:2}
Suppose that a probability density function $f(t)$ is monotonically non-increasing on $\mathbb{R}_{\geq 0}$ and satisfies $\int_{0}^{\infty} f(t) dt = \frac{1}{2}$. 
Then, 
for any $x \geq 0$, we have 
\begin{align*}
\alpha(x) := 4 \int_{0}^{x} f(t) dt \int_{x}^{\infty} t f(t) dt - \frac{x}{2} + 2 x \left( \int_{0}^{x} f(t) dt \right)^{2} \geq 0. 
\end{align*}
If $f(t)$ is strictly decreasing, then $\alpha(x) > 0$ holds for $x > 0$. 
Also, $\alpha(x) = 0$ holds for $x \geq 0$ if and only if $f(t)$ equals to the probability density function of a continuous uniform distribution on $\mathbb{R}_{\geq 0}$. 
\end{lem}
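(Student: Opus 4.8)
The plan is to reduce the inequality to a pointwise statement about a symmetric two–variable kernel, by exploiting the only hypothesis on $f$ that really matters: monotonicity. First I would record that $\alpha(0)=0$ and observe that a naive monotonicity argument cannot succeed: if $f$ is, say, exponential (scaled so that $\int_0^\infty f=\tfrac12$), then $\alpha(x)\to0$ as $x\to\infty$ as well, so $\alpha$ is a ``bump'' that must be controlled globally rather than through the sign of $\alpha'$. Instead I would write the non-increasing density as a superposition of the uniform densities $u_a(t):=\tfrac1a\mathbf{1}_{[0,a]}(t)$. Setting $d\pi(a):=-a\,df(a)$, where $df\le0$ is the Lebesgue--Stieltjes measure of $f$, gives a non-negative measure with $\int_0^\infty d\pi=\tfrac12$ (integration by parts, using $af(a)\to0$, which holds because $f$ is non-increasing and integrable), and one checks the representation $f(t)=\int_0^\infty u_a(t)\,d\pi(a)$.

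Second, I would substitute this representation into $\alpha$. Writing $F_a(x):=\int_0^x u_a=\min(x/a,1)$ and $G_a(x):=\int_x^\infty t\,u_a(t)\,dt=\tfrac{(a^2-x^2)_+}{2a}$, linearity gives $\int_0^x f=\int F_a\,d\pi$ and $\int_x^\infty tf=\int G_a\,d\pi$. To put the stray term $-\tfrac x2$ into the same bilinear form I would use $\tfrac x2=2x\bigl(\int d\pi\bigr)^2$. After symmetrizing in $(a,b)$ (legitimate since $d\pi\times d\pi$ is symmetric), this yields
\[
\alpha(x)=\int_0^\infty\!\!\int_0^\infty \tilde K(a,b;x)\,d\pi(a)\,d\pi(b),\qquad \tilde K(a,b;x):=2\bigl(F_aG_b+F_bG_a\bigr)-2x+2xF_aF_b .
\]

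Third, the heart of the argument is to prove $\tilde K(a,b;x)\ge0$ for all $a,b,x\ge0$. I would split into the three cases determined by the position of $x$ relative to $\min(a,b)$ and $\max(a,b)$. A direct computation with the explicit $F_a,G_a$ should give $\tilde K=\tfrac{(a-b)^2}{ab}\,x$ when $x\le\min(a,b)$, $\tilde K=\tfrac{(\max(a,b)-x)^2}{\max(a,b)}$ when $\min(a,b)<x\le\max(a,b)$, and $\tilde K=0$ when $x>\max(a,b)$; each is manifestly non-negative, so $\alpha(x)\ge0$. I expect this case analysis to be the main obstacle, not because any single case is hard but because the algebra must be organized so that the three pieces collapse to perfect squares — the homogenization of $-\tfrac x2$ is exactly what produces them, and it also makes $\tilde K$ vanish identically on the diagonal $a=b$, which is the uniform equality case.

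Finally, the equality statements follow from the same formula. Since $\tilde K\ge0$, for each fixed $x_0$ we have $\alpha(x_0)=0$ iff $\tilde K(\cdot,\cdot;x_0)=0$ for $\pi\times\pi$-almost every pair. If $f$ is strictly decreasing, then $d\pi$ charges every interval $(x_0,\infty)$ with $x_0>0$, so the pairs with $a\ne b$ and $x_0<\min(a,b)$ have positive $\pi\times\pi$-measure, on which $\tilde K=\tfrac{(a-b)^2}{ab}x_0>0$; hence $\alpha(x_0)>0$ for every $x_0>0$. Conversely, if $\alpha\equiv0$ but $\pi$ had two distinct support points $a_1<a_2$, then choosing any $x_0\in(0,a_1)$ would make the $\tfrac{(a-b)^2}{ab}x_0$ term contribute a positive amount near $(a_1,a_2)$, contradicting $\alpha(x_0)=0$; hence $\pi=\tfrac12\delta_{a_0}$ and $f=\tfrac1{2a_0}\mathbf{1}_{[0,a_0]}$ is the uniform density, which indeed gives $\alpha\equiv0$ because $\tilde K$ vanishes on the diagonal. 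The only genuinely delicate points left to verify are the mixture representation and its mass normalization (requiring $af(a)\to0$ and a finite first moment so that $G$ is well defined).
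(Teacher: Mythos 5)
Your proposal is correct, and it takes a genuinely different route from the paper. The paper argues pointwise in $x$: writing $\gamma=\int_0^x f(t)\,dt$, it bounds $\int_x^\infty t f(t)\,dt$ from below by its minimum over all tail functions $g$ with $0\le g\le f(x)$ and mass $\tfrac12-\gamma$ (the minimizer being the plateau $u\equiv f(x)$ on $[x,\,x+\tfrac{1}{f(x)}(\tfrac12-\gamma)]$, a bathtub/rearrangement argument), then finishes with the elementary inequality $\gamma\ge x f(x)$ and algebra; the equality and strictness claims are then asserted rather tersely from this bound. You instead prove an exact identity: the Khinchin-type representation $f(t)=\int_0^\infty u_a(t)\,d\pi(a)$ with $d\pi(a)=-a\,df(a)$ turns $\alpha(x)$ into a symmetric bilinear form $\iint \tilde K(a,b;x)\,d\pi(a)\,d\pi(b)$, and your case computations of $\tilde K$ (which I have checked: $\tfrac{(a-b)^2}{ab}x$ for $x\le\min(a,b)$, $\tfrac{(\max(a,b)-x)^2}{\max(a,b)}$ for $\min(a,b)<x\le\max(a,b)$, and $0$ beyond) are all correct, including the symmetrization and the homogenization $\tfrac x2=2x\bigl(\int d\pi\bigr)^2$. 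What the paper's route buys is brevity and elementarity --- no Stieltjes measures, no Fubini--Tonelli, just calculus at a fixed $x$; what yours buys is structure: the identity makes non-negativity, strict positivity, and the equality case all transparent at once ($\tilde K$ vanishes identically exactly on the diagonal $a=b$, so $\alpha\equiv0$ forces $\pi=\tfrac12\delta_{a_0}$, i.e.\ $f=\tfrac1{2a_0}\mathbf{1}_{[0,a_0]}$, matching the paper's conclusion), and it even yields quantitative lower bounds the paper's one-sided estimate cannot. Three small points to tighten: (i) work with the right-continuous modification of $f$ (monotone, so this changes nothing in $\alpha$) so that $d\pi$ and the representation are unambiguous, with $af(a)\to0$ justified by $af(a)\le 2\int_{a/2}^a f(t)\,dt\to0$; (ii) all interchanges of integrals are by Tonelli since every integrand is non-negative, so no first-moment hypothesis is actually needed for the identity; (iii) in the strictness argument, say explicitly that since $\pi$ charges every subinterval of $(x_0,\infty)$ it cannot be a single atom there, so two disjoint charged intervals $I_1,I_2\subset(x_0,\infty)$ exist, and on $I_1\times I_2$ the kernel is bounded below by a positive constant --- this closes the gap between ``charges every interval'' and ``positive mass off the diagonal.''
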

These results are a generalization of the results of \cite{doi:10.1080/02664763.2020.1761951}. 
The paper~\cite{doi:10.1080/02664763.2020.1761951} made the assumptions~(I') and (II): 
\begin{enumerate}
\item[(I')] 
The prediction error $z := \hat{y} - y$ is the realized value of a random variable $Z$, 
whose probability density function is a generalized Gaussian distribution function (see, e.g., \cite{Dytso2018}, \cite{doi:10.1080/02664760500079464}, and \cite{Sub23}) with mean zero 
\begin{align*}
f(z) := \frac{1}{2 a b \G(a)} \exp{\left( - \left\lvert \frac{z}{b} \right\rvert^{\frac{1}{a}} \right)}, 
\end{align*}
where $\G(a)$ is the gamma function and $a, b > 0$. 
\end{enumerate}
Assumption~(I) is weaker than (I'). 
Thus, we assume a more general situation than in \cite{doi:10.1080/02664763.2020.1761951}. 
In \cite{doi:10.1080/02664763.2020.1761951}, 
under the assumptions~(I') and (II), 
the minimization problem for the expected value of $L(Z + c)$ is solved and the inequality $\Va[L(Z + C)] \leq \Va[L(Z)]$ is obtained. 
This inequality is derived from the following inequality: {\it For $a, x > 0$, we have 
\begin{align}
x^{a} \g(a, x)^{2} - x^{a} \G(a)^{2} + 2 \g(a, x) \G(2a, x) > 0, 
\end{align}
where }
\begin{align*}
\G(a) := \int_{0}^{+\infty} t^{a - 1} e^{- t} dt, \quad \G(a, x) &:= \int_{x}^{+\infty} t^{a - 1} e^{-t} dt, \quad \g(a, x) := \int_{0}^{x} t^{a - 1} e^{-t} dt. 
\end{align*}
Inequality~(1) is the special case of Lemma~$\ref{lem:2}$ that $f(z)$ is a generalized Gaussian distribution function. 

Assumptions~(I) and (II) have a background in the procurement from an electricity market. 
Suppose that we purchase electricity $\hat{y}$ from an market, based on a forecast of the electricity $y$ that will be needed. 
This situation makes the assumption~(I). 
If $\hat{y} - y > 0$, then there is a waste of procurement fee proportional to $\hat{y} - y$. 
If $y - \hat{y} > 0$, then we are charged with a penalty proportional to $y - \hat{y}$. 
This situation makes the assumption~(II). 
For details, see \cite{MR3850086}.

\section{Proof of results}
For $c \in \mathbb{R}$, let $\sgn(c) := 1 \: (c \geq 0)$; $- 1 \: (c < 0)$. 
From $\int_{0}^{\infty} f(z) dz = \frac{1}{2}$, 
the expected value of $L(Z + c)$ and $L(Z + c)^{2}$ are as follows: For any $c \in \mathbb{R}$, 
\begin{align*}
\Ex[ L(Z + c) ] 
&= (k_{1} + k_{2}) \int_{|c|}^{\infty} z f(z) dz + \frac{c (k_{1} - k _{2})}{2} + |c| (k_{1} + k_{2}) \int_{0}^{|c|} f(z) dz, \\ 
\Ex[ L(Z + c)^{2} ] 
&= (k_{1}^{2} + k_{2}^{2}) \int_{0}^{\infty} z^{2} f(z) dz + \sgn(c) (k_{1}^{2}- k_{2}^{2}) \int_{0}^{|c|} z^{2} f(z) dz \\ 
&\qquad + 2 c (k_{1}^{2} - k_{2}^{2}) \int_{|c|}^{\infty} z f(z) dz + \frac{c^{2} (k_{1}^{2} + k_{2}^{2})}{2} + c |c| (k_{1}^{2} - k_{2}^{2}) \int_{0}^{|c|} f(z) dz. \\ 
\end{align*}
Therefore, the expected value and the variance of $L(Z)$ are as follows: 
\begin{align*}
\Ex[ L(Z) ] &= (k_{1} + k_{2}) \int_{0}^{\infty} z f(z) dz, \\ 
\Va[ L(Z) ] &= (k_{1}^{2} + k_{2}^{2}) \int_{0}^{\infty} z^{2} f(z) dz - (k_{1} + k_{2})^{2} \left( \int_{0}^{\infty} z f(z) dz \right)^{2}. 
\end{align*}

We determine the value $c$ that gives the minimum value of $\Ex[L(Z + c)]$. 
From 
\begin{align*}
\frac{d}{dc} \Ex[L(Z + c)] &= \frac{k_{1} - k_{2}}{2} + \sgn(c) (k_{1} + k_{2}) \int_{0}^{|c|} f(z) dz, \\ 
\frac{d^{2}}{dc^{2}} \Ex[L(Z + c)] &= (k_{1} + k_{2}) f(c) \geq 0, 
\end{align*}
we can see that $\Ex[L(Z + c)]$ has the minimum value  at the zero point of $\frac{d}{dc} \Ex[L(Z + c)]$. 
The zero point $C$ satisfies the following equation: 
$$
\frac{k_{1} - k_{2}}{2} + \sgn(C) (k_{1} + k_{2}) \int_{0}^{|C|} f(z) dz = 0. 
$$
From this, 
$C = 0$ if and only if $k_{1} = k_{2}$. 
Also, we have 
\begin{align*}
\Ex[L(Z + C)] &= (k_{1} + k_{2}) \int_{|C|}^{\infty} z f(z) dz, \\ 
\Va[L(Z + C)] 
&= (k_{1}^{2} + k_{2}^{2}) \int_{0}^{\infty} z^{2} f(z) dz - 2 (k_{1} + k_{2})^{2} \int_{0}^{|C|} f(z) dz \int_{0}^{|C|} z^{2} f(z) dz \\ 
&\qquad- 4 |C| (k_{1} + k_{2})^{2} \int_{0}^{|C|} f(z) dz \int_{|C|}^{\infty} z f(z) dz + \frac{C^{2} (k_{1} + k_{2})^{2}}{4} \\ 
&\qquad \qquad - (k_{1} + k_{2})^{2} \left( \int_{|C|}^{\infty} z f(z) dz \right)^{2} - C^{2} (k_{1} + k_{2})^{2} \left( \int_{0}^{|C|} f(z) dz \right)^{2}. 
\end{align*}
Let 
\begin{align*}
\beta(x) 
&:= - \left( \int_{0}^{\infty} z f(z) dz \right)^{2} + 2 \int_{0}^{x} f(z) dz \int_{0}^{x} z^{2} f(z) dz + 4 x \int_{0}^{x} f(z) dz \int_{x}^{\infty} z f(z) dz \\ 
&\qquad - \frac{x^{2}}{4} + \left( \int_{x}^{\infty} z f(z) dz \right)^{2} + x^{2} \left( \int_{0}^{x} f(z) dz \right)^{2}. 
\end{align*}
Then, $\Va[L(Z)] - \Va[L(Z + C)] = (k_{1} + k_{2})^{2} \beta(C)$ holds. 
From $\beta(0) = 0$ and 
\begin{align*}
\frac{d}{dx} \beta(x) &= 4 \int_{0}^{x} f(z) dz \int_{x}^{\infty} z f(z) dz - \frac{x}{2} + 2 x \left( \int_{0}^{x} f(z) dz \right)^{2} \\
&\qquad + 2 f(x) \int_{0}^{x} z^{2} f(z) dz + 2 x f(x) \int_{x}^{\infty} z f(z) dz, 
\end{align*}
if Lemma~$\ref{lem:2}$ is proved, 
then Theorem~$\ref{thm:1}$ is immediately obtained. 
We prove Lemma~$\ref{lem:2}$. 

\begin{proof}[Proof of Lemma~$\ref{lem:2}$]
Take any $x \geq 0$. 
If $f(x) = 0$, then $\alpha(x) = 0 - \frac{x}{2} + 2 x \cdot \frac{1}{4} = 0$. 
Below, we consider the case that $f(x) > 0$. 
Let $\gamma := \int_{0}^{x} f(t) dt$. 
For a function $g = g(t)$ satisfying $f(x) \geq g(t) \geq 0$ for $x \leq t$ and $\gamma + \int_{x}^{\infty} g(t) dt = \frac{1}{2}$, 
we define a functional $S(g)$ by 
\begin{align*}
S(g) := \int_{x}^{\infty} t g(t) dt. 
\end{align*}
Regarding $S(g)$ as a solid with the bottom surface area $\int_{x}^{\infty} g(t) dt = \frac{1}{2} - \gamma$ (constant), 
we find that if we make $g(t)$ as large as possible within the range where $t$ is small, 
then $S(g)$ become smaller. 
Thus, the function $g$ that minimizes $S(g)$ is $g(t) = u(t)$ defined by 
\begin{align*}
u(t) := 
\begin{cases}
f(x), & x \leq t \leq x + \frac{1}{f(x)} \left( \frac{1}{2} - \gamma \right), \\ 
0, & \text{otherwise}. 
\end{cases}
\end{align*}
From 
\begin{align*}
S(u) = \int_{x}^{\infty} t u(t) dt = x \left( \frac{1}{2} - \gamma \right) + \frac{1}{2 f(x)} \left( \gamma^{2} - \gamma + \frac{1}{4} \right)
\end{align*}
and $\gamma \geq x f(x)$, we have 
\begin{align*}
\alpha(x) 
&\geq 4 \gamma S(u) - \frac{x}{2} + 2 x \gamma^{2} \\ 
&= 4 \gamma \left\{ x \left( \frac{1}{2} - \gamma \right) + \frac{1}{2 f(x)} \left( \gamma^{2} - \gamma + \frac{1}{4} \right) \right\} - \frac{x}{2} + 2 x \gamma^{2} \\ 
&\geq 2 x \gamma - 4 x \gamma^{2} + 2 x \left( \gamma^{2} - \gamma + \frac{1}{4} \right) - \frac{x}{2} + 2 x \gamma^{2} \\ 
&= 0. 
\end{align*}
Also, from this, if $f(t)$ is strictly decreasing, then $\alpha(x) > 0$ holds for $x > 0$. 
In addition, $f(t)$ is the function of the form 
\begin{align*}
f(t) = 
\begin{cases}
\frac{1}{2 a}, & 0\leq t \leq a, \\ 
0, & t > a
\end{cases}
\end{align*}
if and only if $\alpha(x) = 0$ holds for $x \geq 0$. 
\end{proof}

\clearpage

\bibliography{reference}
\bibliographystyle{plain}

\medskip
\begin{flushleft}
Faculty of Education, 
University of Miyazaki, 
1-1 Gakuen Kibanadai-nishi, 
Miyazaki 889-2192, 
Japan \\ 
{\it Email address}, Naoya Yamaguchi: n-yamaguchi@cc.miyazaki-u.ac.jp \\ 
{\it Email address}, Yuka Yamaguchi: y-yamaguchi@cc.miyazaki-u.ac.jp \\ 
\end{flushleft}

\begin{flushleft}
General Education Center, Tottori University of Environmental Studies,
1-1-1 Wakabadai-kita, Tottori, 689-1111. Japan \\ 
{\it Email address}, Maiya Hori: m-hori@kankyo-u.ac.jp
\end{flushleft}

\end{document}